\newtheorem{theorem}{Theorem}
\newtheorem{proposition}[theorem]{Proposition}
\title{On the Realization of Double Occurrence Words}
\author{B.\ Shtylla}
\author{L.\ Traldi}
\author{L.\ Zulli}
\address{Dept.\ of Mathematics, Univ.\ of Utah, Salt Lake City, UT 84112}
\email{shtyllab@math.utah.edu}
\address{Dept.\ of Mathematics, Lafayette College, Easton, PA 18042}
\email{traldil@lafayette.edu}
\address{Dept.\ of Mathematics, Lafayette College, Easton, PA 18042}
\email{zullil@lafayette.edu}
\date{09/17/07}
\begin{document}

\begin{abstract} 
Let $S$ be a double occurrence word, and let $M_S$ be the word's interlacement matrix, regarded as a matrix over $GF(2)$. Gauss addressed the question of which double occurrence words are realizable by generic closed curves in the plane. We reformulate answers given by Rosenstiehl and by de~Fraysseix and Ossona~de~Mendez to give new graph-theoretic and algebraic characterizations of realizable words.
Our algebraic characterization is especially pleasing: $S$ is realizable if and only if there exists a diagonal matrix $D_S$ such that $M_S+D_S$ is idempotent over $GF(2)$.
\end{abstract}

\maketitle
\setcounter{section}{0}

\section{Introduction}

A \textit{double occurrence word} is a finite string of symbols in which each symbol appears precisely twice. For example,  $adbacdcb$ is a double occurrence word in the symbols $a,b,c,d$. Two distinct symbols in a double occurrence word are said to be \textit{interlaced} if each appears precisely once between the two occurrences of the other. For example, in the word above, $a$ and $b$ are interlaced but $b$ and $c$ are not. For each double occurrence word $S$ we have a simple graph $\Lambda_S$, called the \textit{interlacement  graph} of $S$ in [1], that has a vertex for each symbol in $S$, and in which two vertices are adjacent if and only if their corresponding symbols are interlaced.  We will denote the adjacency matrix of $\Lambda_S$, regarded as a matrix over $GF(2)$, by $M_S$. This matrix will be called the \textit{interlacement matrix} of $S$.

Each generic smooth immersion $f\colon S^1\to S^2$ generates a double occurrence word, as follows: The image of $f$ contains a finite set of (transverse) double points; assign a unique symbol to each such point. Orient $S^1$, which induces an orientation on the image of $f$. Begin at a point in the image that is not a double point, and trace along the image in the positive direction, as determined by the orientation. Record the double point symbols as you encounter them, stopping when you return to the starting point. The sequence of symbols so produced is a double occurrence word, which we say is \textit{realized} by the immersion $f$.

Gauss [2] addressed the question of which double occurrence words are realizable, and noted that a necessary condition for the realization of a word $S$ is that $\Lambda_S$ be eulerian. Necessary and sufficient conditions for realization have been given by Marx [4], by Lov\'asz and Marx [3], by Rosenstiehl [5], and by de~Fraysseix and Ossona~de~Mendez [1]. 
%See [MR1698548 (2001i:05056b)] for a historical summary and references. 
In this note, we give graph-theoretic and algebraic reformulations of a characterization of realizable double occurrence words that appears in [1]. These reformulations yield Theorem~2 below. The characterization in [1] is itself a restatement of a characterization given in [5]. 

\section{Results}

Let $G$ be a finite simple graph with vertex set $V(G)$, and let $A\subseteq V(G)$ be arbitrary. Following [1], we say that $G$ \textit{satisfies property} $P(A)$ if $G$ is eulerian and, for each pair of distinct vertices $u$ and $v$ in $G$, $u$ and $v$ have an odd number of common neighbors if and only if $u$ and $v$ are neighbors and either both are in $A$ or neither is in $A$. (Note that the requirement that $G$ be eulerian would arise from this condition if we allowed $u=v$.)

Let $\tilde G$ be a finite looped graph without multiple loops or multiple edges. We say that $\tilde G$ is an \textit{orthoprojection graph} if, for each pair of (not necessarily distinct) vertices $u$ and $v$ in $\tilde G$, $u$ and $v$ have an odd number of common neighbors if and only if $u$ and $v$ are neighbors. (Note: A vertex in $\tilde G$ is a neighbor of itself if and only if it is looped.) It is not difficult to check that a graph is an orthoprojection graph if and only if its adjacency matrix, regarded as a matrix over $GF(2)$, is (symmetric and) idempotent. The name ``orthoprojection graph'' reflects the fact that such a matrix represents an orthogonal projection with respect to the standard ``dot product'' over $GF(2)$.

Given a simple graph $G$ and a subset $A\subseteq V(G)$, we can form a looped graph $\tilde G_A$ by placing a loop at each vertex in $A$, in which case we say that $G$ \textit{lifts} to $\tilde G_A$.

Here is our key observation.

\begin{proposition} Let $G$ be a finite simple graph with vertex set $V(G)=\{v_1,\ldots, v_n\}$, and let $A\subseteq V(G)$ be arbitrary. Let $M_G$ be the adjacency matrix of $G$ and let $D_A$ be the diagonal matrix with $d_{ii} =1$ if and only if $v_i\in A$, both regarded as matrices over $GF(2)$. Then the following are equivalent:
\begin{enumerate}
\item $G$ satisfies property $P(A)$.
\item $\tilde G_A$ is an orthoprojection graph.
\item $M_G+D_A$ is idempotent over $GF(2)$.
\end{enumerate}
\end{proposition}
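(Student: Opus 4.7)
The plan is to prove (c) $\iff$ (b) immediately, then derive (a) $\iff$ (b) by a careful but elementary parity comparison between common-neighbor counts in $G$ and in $\tilde G_A$.

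First, I would observe that the adjacency matrix of $\tilde G_A$, regarded as a matrix over $GF(2)$, is precisely $M_G+D_A$: placing a loop at $v_i$ is exactly what puts a $1$ in the $(i,i)$-entry. The equivalence (b) $\iff$ (c) is then immediate from the characterization of orthoprojection graphs already noted in the excerpt (adjacency matrix symmetric and idempotent).

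For (a) $\iff$ (b), I would compare the neighbor sets vertex by vertex. Writing $N_G(w)$ for the neighbors of $w$ in $G$, the neighbor set of $w$ in $\tilde G_A$ is $N_G(w)$ together with $w$ itself if $w\in A$. Taking intersections, for distinct $u,v$ one gets
\[
\bigl|N_{\tilde G_A}(u)\cap N_{\tilde G_A}(v)\bigr|
\;\equiv\;
\bigl|N_G(u)\cap N_G(v)\bigr|+\varepsilon \pmod 2,
\]
where $\varepsilon=[u\in A]+[v\in A]$ if $u,v$ are adjacent in $G$ and $\varepsilon=0$ otherwise (the only new possible common neighbors are $u$ and $v$ themselves, and those can be common neighbors only when the relevant endpoint is looped and $u,v$ are already adjacent). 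The orthoprojection requirement for $u\neq v$ demands that this parity be $1$ exactly when $u,v$ are adjacent in $\tilde G_A$, i.e.\ in $G$. A small case split on whether $u,v$ are adjacent in $G$ and on how many of them lie in $A$ then shows this is equivalent, vertex pair by vertex pair, to the condition in $P(A)$ that $u,v$ have an odd number of common neighbors in $G$ iff they are adjacent and either both or neither lies in $A$.

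It remains to handle the $u=v$ clause in the orthoprojection definition, which has no analogue in $P(A)$ other than the global Eulerian requirement. Here $|N_{\tilde G_A}(u)|=\deg_G(u)+[u\in A]$, and $u$ is a neighbor of itself in $\tilde G_A$ iff $u\in A$; writing out both cases $u\in A$ and $u\notin A$ shows that the orthoprojection condition at $u=v$ amounts to $\deg_G(u)$ being even, i.e.\ to $G$ being eulerian. Combining this with the $u\neq v$ analysis gives (a) $\iff$ (b).

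I do not expect any real obstacle here; the only thing to watch is the bookkeeping of which of $u$ and $v$ can become common neighbors in $\tilde G_A$ and under what conditions, so that the parity shift $[u\in A]+[v\in A]$ is applied exactly in the adjacent case.
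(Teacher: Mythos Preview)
Your argument is correct and follows essentially the same route as the paper: both compare $|N_G(u)\cap N_G(v)|$ with $|N_{\tilde G_A}(u)\cap N_{\tilde G_A}(v)|$ and observe that the parity shifts precisely when $u,v$ are adjacent and exactly one lies in $A$, then identify $M_G+D_A$ as the adjacency matrix of $\tilde G_A$ to link (b) and (c). You spell out the $u=v$ case and the indicator-function bookkeeping more explicitly than the paper does, but the underlying idea is identical.
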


\begin{proof} For an arbitrary vertex $u$, let $n(u)$ denote the neighborhood of $u$ in $G$, and let $\tilde n(u)$ denote the neighborhood of $u$ in $\tilde G_A$. Note that the parity of $|\tilde n(u)|$ differs from that of $|n(u)|$ if and only if $u\in A$. For distinct vertices $u$ and $v$, the parity of $|\tilde n(u)\cap\tilde n(v)|$ differs from that of $|n(u)\cap n(v)|$ if and only if $u$ and $v$ are neighbors and one is a member of $A$ while the other is not. From here it is not difficult to verify that a) and b) are equivalent. That either of these properties is equivalent to c) follows from the fact that $|\tilde n(u)\cap\tilde n(v)|$ is equal (mod 2) to the corresponding entry in $(M_G+D_A)^2$.
\end{proof}

Combining this observation with the restatement of Rosenstiehl's Theorem that appears in the proof of Theorem~10 in [1], we have

\begin{theorem} Let $S$ be a double occurrence word, with interlacement graph $\Lambda_S$ and interlacement matrix $M_S$. Then the following are equivalent:
\begin{enumerate}
\item $S$ is realizable.
\item $\Lambda_S$ lifts to an orthoprojection graph.
\item There exists a diagonal matrix $D_S$ such that $M_S + D_S$ is idempotent over $GF(2)$.
\end{enumerate}

\end{theorem}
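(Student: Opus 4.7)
The plan is to combine Proposition~1 with the restatement of Rosenstiehl's Theorem that the authors attribute to the proof of Theorem~10 in [1]. That classical input, which I would simply cite, asserts that a double occurrence word $S$ is realizable if and only if there exists a subset $A \subseteq V(\Lambda_S)$ for which the interlacement graph $\Lambda_S$ satisfies property $P(A)$. Once this is accepted, the proof of Theorem~2 reduces to existentially quantifying over $A$ in the three-way equivalence provided by Proposition~1.

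Concretely, I would argue as follows. By the cited characterization, condition (a) holds if and only if there exists $A \subseteq V(\Lambda_S)$ such that $\Lambda_S$ satisfies $P(A)$. Applying Proposition~1 with $G=\Lambda_S$, this is equivalent to the existence of some $A$ for which the lift $\tilde{\Lambda}_{S,A}$ is an orthoprojection graph, which is exactly condition (b). By the same proposition, it is equivalent to the existence of some $A$ for which $M_S + D_A$ is idempotent over $GF(2)$. Since every diagonal $GF(2)$-matrix indexed by $V(\Lambda_S)$ arises as $D_A$ for a unique $A$, the latter is precisely condition (c). Transitivity of $\Leftrightarrow$ then yields (a) $\Leftrightarrow$ (b) $\Leftrightarrow$ (c).

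One small formality I would be explicit about is the quantification: Proposition~1 is stated for a \emph{fixed} $A$, whereas Theorem~2 quantifies existentially over $A$ (equivalently, over $D_S$). This transfer is automatic because the conditions in Proposition~1 are equivalent for each $A$ separately, so the existential closures are equivalent as well. The main obstacle in writing the proof is really bibliographic rather than mathematical: one has to pin down the precise form of the Rosenstiehl restatement from [1] that gives (a) $\Leftrightarrow$ ``$\Lambda_S$ satisfies $P(A)$ for some $A$''. Beyond that citation, all of the substantive content of Theorem~2 is already encapsulated in Proposition~1, and nothing else needs to be proved.
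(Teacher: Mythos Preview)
Your proposal is correct and follows exactly the paper's own approach: the paper simply states that Theorem~2 follows by combining Proposition~1 with the restatement of Rosenstiehl's Theorem from the proof of Theorem~10 in [1]. Your write-up is in fact more explicit than the paper's, spelling out the existential quantification over $A$ and the bijection between subsets $A$ and diagonal matrices $D_A$, but the underlying argument is identical.
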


\section{Two Remarks} 

\noindent \textbf{Remark.} 
In light of the result above, it is natural to ask if every orthoprojection graph arises by lifting the interlacement graph of a (necessarily realizable) double occurrence word. The answer is ``no.'' The nine-vertex graph depicted in Figure~1 is an orthoprojection graph, but is not the interlacement graph of a double occurrence word. See p.~86 of [6]. This graph is a minimal example; each orthoprojection graph with fewer than nine vertices is the lift of an interlacement graph. See Section~8 of [6] for depictions and further discussion.

\begin{figure}[htb]
\begin{center}
\includegraphics[width=1.1in]{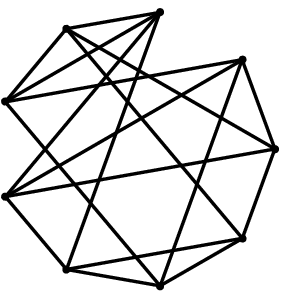}
\caption{}
\label{}
\end{center}
\end{figure}

\noindent \textbf{Remark.} 
It is possible to give a self-contained topological proof of the equivalence of a) and c) in the statement of Theorem~2 above. Indeed, a proof that a) implies c) already appears in [7]. A key observation is that each generic smooth immersion $f\colon S^1\to S^2$ with $n$ double points gives rise to an orthogonal direct sum decomposition of an $n$-dimensional vector space over $GF(2)$, as follows:
From $S^2$, remove a small open disk centered at each of the $n$ double points. This yields a compact surface whose boundary is the disjoint union of $n$ circles. On each of these boundary circles, identify points antipodally. (If necessary, isotope $f$ so that its image meets each circle in two pairs of antipodal points.) The result is then a closed non-orientable surface $\Sigma$ of genus $n$, and the image of the original curve is now embedded and separating in $\Sigma$.
An application of the Mayer-Vietoris homology sequence with coefficients taken in $GF(2)$ then provides the direct sum decomposition described above. In short, the checkerboard splitting of $S^2$ into white and black regions that is produced by the image of $f$ yields an algebraic splitting of an $n$-dimensional vector space over $GF(2)$. Such an algebraic splitting yields a pair of orthogonal projections, and thus a pair of related orthoprojection graphs. One then verifies that the resulting orthoprojection graphs are lifts of the interlacement graph of the original immersion $f$.

A similar topological approach can be taken to give a constructive, inductive proof that c) implies a).

\end{document}